\newtheorem{lemma}{\sc Lemma}
\newtheorem{theorem}[lemma]{\sc Theorem}
\newtheorem{assumption}{\sc Assumption}
\newtheorem{definition}{\sc Definition}
\newtheorem{example}{\sc Example}
\renewcommand{\matrix}[2]{\left[\begin{array}{#1} #2 \end{array}\right] }
\DeclareMathOperator*{\diag}{diag}
\DeclareMathOperator*{\trace}{Tr}
\DeclareMathOperator*{\spanf}{span}
\renewcommand{\footnoterule}{%
  \kern 0pt
  \hrule width 0.3\textwidth height .5pt
  \kern 2pt
}
\newcommand{\IEEEQED}{~\rule[-1pt]{5pt}{5pt}\par\medskip}
\newenvironment{proof}{{\noindent \bf Proof:\ }}{ \hfill \IEEEQED}
\date{}
\begin{document}

\title{Optimal $H_\infty$ Control Design under Model Information Limitations \\ and State Measurement Constraints\thanks{The work was supported by the Swedish Research Council and the Knut and Alice Wallenberg Foundation. }}
\author{Farhad~Farokhi, Henrik~Sandberg,~and~Karl~H.~Johansson\thanks{The authors are with ACCESS Linnaeus Center, School of Electrical Engineering, KTH Royal Institute of Technology, SE-100 44 Stockholm, Sweden. E-mails: \{farakhi,hsan,kallej\}@kth.se }}

\maketitle

\begin{abstract} We present a suboptimal control design algorithm for a family of continuous-time parameter-dependent linear systems that are composed of interconnected subsystems. We are interested in designing the controller for each subsystem such that it only utilizes partial state measurements (characterized by a directed graph called the control graph) and limited model parameter information (characterized by the design graph). The algorithm is based on successive local minimizations and maximizations (using the subgradients) of the $H_\infty$--norm of the closed-loop transfer function with respect to the controller gains and the system parameters. We use a vehicle platooning example to illustrate the applicability of the results.
\end{abstract}

\section{Introduction}
Distributed and decentralized control design problem is a classical topic in the control literature (e.g., see~\cite{levine1971optimal,witsenhausen1968counterexample,sandell1978survey}).
Most of the available approaches in this field implicitly assume that the design procedure is done in a centralized fashion using the complete knowledge of the model parameters. However, this assumption is not realistic when dealing with large-scale systems due to several reasons. For instance, the overall system might be assembled from modules that are designed by separate entities without access to the entire set of model parameters because at the time of design this information was unavailable. Another reason could be that we want to keep the system maintenance simple by making it robust to nonlocal parameter changes; i.e., if a controller is designed knowing only local parameters, we do not need to redesign it whenever the parameters of a subsystem not in its immediate neighborhood change.  Financial gains, for instance, in the case of power network control, could also be a motivation for limited access to model knowledge  since competing companies are typically reluctant to share information on their production with each other. For a more detailed survey of the motivations behind control design using local model parameter information, see~\cite[Ch.\,1]{Farokhi-thesis2012}.

Recently, there have been some studies on control design with limited model information~\cite{Farokhi-thesis2012,langbort2010distributed,FLJ2012,farokhi2011dynamic}. For instance, the authors in~\cite{FLJ2012} introduce control design strategies as mappings from the set of plants to the set of structured static state-feedback controllers. They compare the control design strategies using a measure called the competitive ratio, which is defined to be the worst case ratio (over the set of all possible plants) of the closed-loop performance of the control design strategy in hand scaled by the best performance achievable having access to global model parameter information. Then, they seek a minimizer of the competitive ratio over a family of control design strategies that use only the parameters of their corresponding subsystems when designing controllers. Noting that, in those studies, the plants can vary over an unbounded set, the results are somewhat conservative. Additionally, all the aforementioned studies deal with discrete-time system as it was proved that the competitive ratio is unbounded when working with continuous-time systems~\cite{langbort2010distributed}. Not much have been done in optimal control design under limited model information for continuous-time systems.

In this paper, contrary to previous studies, we investigate continuous-time systems with parameters in a compact set. Specifically, we propose a numerical algorithm for calculating suboptimal $H_\infty$ control design strategies (i.e., mappings from the set of parameters to the set of structured static state-feedback controllers) for a set of parameter-dependent linear continuous-time systems composed of interconnected subsystems. We consider the case where each subsystem has access to a (possibly strict) subset of the system parameters when designing and implementing its local controller. Additionally, we assume that each local controller uses partial state measurements to close the feedback loop. To solve the problem, we first expand the control design strategies in terms of the system parameters (using a fixed set of basis functions) in such a way that each controller only uses its available parameters. Following the approach in~\cite{Apkarian1576856}, we calculate the subgradient of the $H_\infty$--norm of the closed-loop transfer function with respect to the controller gains and the system parameters. Then, we propose a numerical optimization algorithm based on successive local minimizations and maximizations of this performance measure with respect to the controller gains and the system parameters. Designing parameter-dependent controllers has a very rich history in the control literature, specially in gain scheduling and supervisory control; e.g., see~\cite{packard1994gain,rugh2000research,leith2000survey,nichols1993gain,morse1996supervisory,scorletti1998improved,apkarian2000parameterized}. However, most of these studies implicitly assume that the overall controller has access to all the parameters. Contrary to these studies, we assume that local controllers have access to only subsets of the system parameters.

The rest of the paper is organized as follows. In Section~\ref{sec:formulation}, we introduce the problem formulation. We propose a numerical algorithm for calculating a suboptimal $H_\infty$ control design strategy in Section~\ref{sec:results}. We illustrate the approach on a vehicle platooning example in Section~\ref{sec:platoon}. Finally, we present the conclusions in Section~\ref{sec:conclusion}.

\subsection{Notation}
Let the sets of integer and real numbers be denoted by $\mathbb{Z}$ and $\mathbb{R}$, respectively. Let $\mathbb{Z}_{> (\geq) n}=\{m\in\mathbb{Z}\;|\; m> (\geq) n\}$ and $\mathbb{R}_{> (\geq) x}=\{y\in\mathbb{R}\;|\; y> (\geq) x\}$ for $n\in\mathbb{Z}$ and $x\in\mathbb{R}$.

We use capital roman letters to denote matrices. The notation $A > (\geq) 0$ shows that the symmetric matrix $A$ is positive (semi-)definite. For any $q,m\in\mathbb{Z}_{\geq 1}$, we define the notation $\mathbb{B}_m^q=\{(Y_1,\dots,Y_q) \,|\,Y_i\in\mathbb{R}^{m\times m},Y_i\geq 0,\sum_{i=1}^q \trace(Y_i)=1\}$. We use $\mathbb{B}^q$ whenever the dimension $m$ is irrelevant (or can be deduced from the text). For any $A\in\mathbb{R}^{n\times m}$ and $B\in\mathbb{R}^{p \times q}$, we use $A\otimes B \in\mathbb{R}^{np \times mq}$ to denote the Kronecker product of these matrices.

Let an ordered set of real functions $(\xi_\ell)_{\ell=1}^L$ be given such that $\xi_\ell:\mathbb{R}^p\rightarrow \mathbb{R}$, $1\leq \ell\leq L$, are continuous functions with continuous first derivatives. We define $\spanf((\xi_\ell)_{\ell=1}^L)$ as the set composed of all linear combinations of the functions $(\xi_\ell)_{\ell=1}^L$; i.e., for any $f\in\spanf((\xi_\ell)_{\ell=1}^L)$, there exists at least one ordered set of real numbers $(x_\ell)_{\ell=1}^L$ such that $f(\alpha)=\sum_{\ell=1}^L x_\ell\xi_\ell(\alpha)$ for all $\alpha\in\mathbb{R}^p$. For any $n,m\in\mathbb{Z}_{\geq 1}$, $\spanf((\xi_\ell)_{\ell=1}^L)^{n\times m}$ denotes the set of all functions $A:\mathbb{R}^p\rightarrow \mathbb{R}^{n\times m}$ such that $A(\alpha)=\sum_{\ell=1}^L \xi_\ell(\alpha)A^{(\ell)}$ with $A^{(\ell)}\in\mathbb{R}^{n\times m}$ for all $1\leq \ell\leq L$.

We consider directed graphs with vertex set $\mathcal{V}=\{1,\dots,N\}$ for a fixed $N\in\mathbb{Z}_{\geq 1}$. For a graph $\mathcal{G}=(\mathcal{V},\mathcal{E})$, where $\mathcal{E}$ denotes its edge set, we define the adjacency matrix $S\in\{0,1\}^{N\times N}$ such that $s_{ij}=1$ if $(j,i) \in \mathcal{E}$, and $s_{ij}=0$ otherwise. We define the set of structured matrices $\mathcal{X}(S,(n_i)_{i=1}^N,(m_i)_{i=1}^N)$ as the set of all matrices $X\in\mathbb{R}^{n \times m}$ with $n=\sum_{i=1}^N n_i$ and $m=\sum_{i=1}^N m_i$ such that $X_{ij}=0 \in \mathbb{R}^{n_i \times n_j}$ whenever $s_{ij}=0$ for $1\leq i,j\leq N$.

For any function $f:\mathcal{U}\rightarrow \mathcal{Y}$, we call $\mathcal{U}$ the domain of $f$ and $\mathcal{Y}$ the codomain of $f$. Additionally, we define its image $f(\mathcal{U})$ as the set of all $y\in\mathcal{Y}$ such that $y=f(x)$ for a $x\in\mathcal{U}$.

For any $n\in\mathbb{Z}_{\geq 1}$, $I_n$ denotes the $n\times n$ identity matrix. To simplify the presentation, we use $I$ whenever the dimension can be inferred from the text. For any $n,m\in\mathbb{Z}_{\geq 1}$, we define $0_{n\times m}$ as the $n\times m$ zero matrix. Finally, let $\mathbf{1}_n\in\mathbb{R}^n$ be a vector of ones. 

\section{Mathematical Problem Formulation} \label{sec:formulation}
In this section, we introduce the underlying system model, the controller structure, and the closed-loop performance criterion.

\subsection{System Model}
Consider a continuous-time linear parameter-dependent system composed of $N\in\mathbb{Z}_{\geq 1}$ subsystems. Let subsystem~$i$, $1\leq i\leq N$, be described as
\begin{equation} \label{eqn:subsys:1}
\begin{split}
\dot{x}_i(t)&=\sum_{j=1}^N \big[ A_{ij}(\alpha_i)x_j(t)+ (B_w)_{ij}(\alpha_i)w_i(t)+(B_u)_{ij}(\alpha_i)u_i(t)\big],
\end{split}
\end{equation}
where $x_i(t)\in\mathbb{R}^{n_i}$ is the state vector, $w_i(t)\in\mathbb{R}^{m_{w,i}}$ is the exogenous input, $u_i(t)\in\mathbb{R}^{m_{u,i}}$ is the control input, and lastly, $\alpha_i\in\mathbb{R}^{p_i}$ is the parameter vector. Let us introduce the augmented state, control input, exogenous input, and parameter vector as
\begin{equation*}
\begin{split}
x(t)=&\matrix{ccc}{x_1(t)^\top & \cdots & x_N(t)^\top}^\top\in\mathbb{R}^n,\\
w(t)=&\matrix{ccc}{w_1(t)^\top & \cdots & w_N(t)^\top}^\top\in\mathbb{R}^{m_w},\\
u(t)=&\matrix{ccc}{u_1(t)^\top & \cdots & u_N(t)^\top}^\top\in\mathbb{R}^{m_u},\\ \alpha(t)=&\matrix{ccc}{\alpha_1(t)^\top & \cdots & \alpha_N(t)^\top}^\top\in\mathbb{R}^p,
\end{split}
\end{equation*}
where $n=\sum_{i=1}^N n_i$, $m_w=\sum_{i=1}^N m_{w,i}$, $m_u=\sum_{i=1}^N m_{u,i}$, and $p=\sum_{i=1}^N p_i$. This results in
$$
\dot{x}(t)=A(\alpha)x(t)+B_w(\alpha)w(t)+B_u(\alpha)u(t).
$$
We use the notation $\mathcal{A}$ to denote the set of all eligible parameter vectors~$\alpha$. We make the following standing assumption concerning the model matrices:
\begin{assumption} \label{assum:1} There exists a basis set $(\xi_{\ell})_{\ell=1}^{L}$ such that $A(\alpha)\in\spanf((\xi_{\ell})_{\ell=1}^{L})^{n\times n}$, $B_w(\alpha)\in\spanf((\xi_{\ell})_{\ell=1}^{L})^{n\times m_w}$, and $B_u(\alpha)\in\spanf((\xi_{\ell})_{\ell=1}^{L})^{n\times m_u}$.
\end{assumption}

\begin{example} \label{example:1}
Consider a parameter-dependent system described by
\begin{equation*}
\begin{split}
\dot{x}_1(t)&=(-2.0+\alpha_1)x_1(t)+ (0.1+0.4\sin(\alpha_1))x_2(t)+(0.6-0.3\sin(\alpha_1))u_1(t)+w_1(t),
\\ \dot{x}_2(t)&=+0.3x_1(t)+(-1.0-\alpha_2)x_2(t)+(1.0+0.1\cos(\alpha_2))
u_2(t)+w_2(t),
\end{split}
\end{equation*}
where $x_i(t)\in\mathbb{R}$, $u_i(t)\in\mathbb{R}$, $w_i(t)\in\mathbb{R}$, and $\alpha_i\in\mathbb{R}$ are respectively the state, the control input, the exogenous input, and the parameter of subsystem $i=1,2$. We define the set of eligible parameters as
$$
\mathcal{A}=\left\{\matrix{c}{\alpha_1 \\ \alpha_2}\in\mathbb{R}^2 \;\big| \; \alpha_i\in[-1,+1]  \mbox{ for } i=1,2\right\}.
$$
Clearly, this system satisfies Assumption~\ref{assum:1} with basis functions $\xi_1(\alpha)=1$, $\xi_2(\alpha)=\alpha_1$, $\xi_3(\alpha)=\sin(\alpha_1)$, $\xi_4(\alpha)=\cos(\alpha_2)$, and $\xi_5(\alpha)=\alpha_2$.
\hfill $\blacktriangleleft$
\end{example}

\subsection{Measurement Model and Controller}
Let a control graph $\mathcal{G}_{\mathcal{K}}$ with adjacency matrix $S_{\mathcal{K}}$ be given. We consider the case where each subsystem has access to a (potentially parameter-dependent) observation vector $y_i(t)\in\mathbb{R}^{o_{y,i}}$ that can be described by
$$
y_i(t)=\sum_{j=1}^N \big[(C_y)_{ij}(\alpha_i)x_j(t) +(D_{yw})_{ij}(\alpha_i)w_j(t) \big].
$$
Now, we can define the augmented observation vector as
$$
y(t)=\matrix{ccc}{y_1(t)^\top & \cdots & y_N(t)^\top}^\top\in\mathbb{R}^{o_y},
$$
where $o_y=\sum_{i=1}^N o_{y,i}$. Thus,
$$
y(t)=C_y(\alpha)x(t)+D_{yw}(\alpha)w(t).
$$
We say that the measurement vector $y(t)$ obeys the structure given by the control graph $\mathcal{G}_\mathcal{K}$ if $C_y(\mathcal{A})\in \linebreak[4] \mathcal{X}(S_{\mathcal{K}},(o_{y,i})_{i=1}^N,(n_i)_{i=1}^N)$ and $D_{yw}(\mathcal{A})\in \mathcal{X}(S_{\mathcal{K}},(o_{y,i})_{i=1}^N, (m_{w,i})_{i=1}^N)$, where the definition of the structured set $\mathcal{X}$ can be found in the notation subsection. We make the following standing assumption concerning the observation matrices:
\begin{assumption} \label{assum:2} For the same basis set $(\xi_{\ell})_{\ell=1}^{L}$ as in Assumption~\ref{assum:1}, $C_y(\alpha)\in\spanf((\xi_{\ell})_{\ell=1}^{L})^{o_y\times n}$ and $D_{yw}(\alpha)\in\spanf((\xi_{\ell})_{\ell=1}^{L})^{o_y\times m_w}$.
\end{assumption}

In this paper, we are interested in linear static state-feedback controllers of the form
\begin{equation} \label{eqn:controllaw}
u(k)=Ky(k),
\end{equation}
where $K\in\mathcal{K}=\mathcal{X}(I,(m_{u,i})_{i=1}^N,(o_{y,i})_{i=1}^N)$. Note that fol-lowing the same reasoning as in~\cite{Johnson1099586,Apkarian1576856}, the extension to fixed-order dynamic controllers is trivial (using just a change of variable).

\begin{figure}[t]
\centering
\vspace{-.2in}
$$
\hspace{-.1in}
\begin{array}{cc}
\begin{tikzpicture}[>=stealth',shorten >=2pt,initial/.style={}]
\node[state,minimum size=0.1cm,scale=0.7] (P1) {$P_1$};
\node[draw=none,fill=none,node distance=0.4cm] (dummy) [above =of P1] {};
\node[draw=none,fill=none,node distance=0.4cm] [right =of dummy] {$\mathcal{G}_\mathcal{K}$};
\node[state,minimum size=0.1cm,node distance=1.0cm,scale=0.7] (P2) [right =of P1]  {$P_2$};
\tikzset{mystyle/.style={->,double=black}}
\tikzset{mystyle/.style={->,relative=false,out=210,in=-20,double=black}}
\path (P2) edge [mystyle] (P1);
\path (P1) edge [->,in=180,out=90,looseness=8,double=black] (P1);
\path (P2) edge [->,in=0,out=90,looseness=8,double=black] (P2);
\end{tikzpicture}
&
\hspace{-.4in}
\begin{tikzpicture}[>=stealth',shorten >=2pt,initial/.style={}]
\node[state,minimum size=0.1cm,scale=0.7] (P1) {$P_1$};
\node[draw=none,fill=none,node distance=0.4cm] (dummy) [above =of P1] {};
\node[draw=none,fill=none,node distance=0.4cm] [right =of dummy] {$\mathcal{G}_\mathcal{C}$};
\node[state,minimum size=0.1cm,node distance=1.0cm,scale=0.7] (P2) [right =of P1]  {$P_2$};
\path (P1) edge [->,in=180,out=90,looseness=8,double=black] (P1);
\path (P2) edge [->,in=0,out=90,looseness=8,double=black] (P2);
\end{tikzpicture}
\end{array}
$$
\caption{\label{graph:example:GK} The control graph $\mathcal{G}_\mathcal{K}$ and the design graph $\mathcal{G}_\mathcal{C}$ utilized in the recurring numerical example.} 
\end{figure}
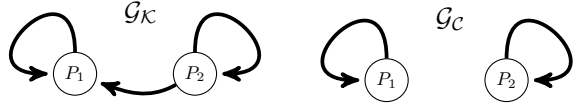

\setcounter{example}{0}
\begin{example}[Cont'd] \label{example:2} Let the control graph $\mathcal{G}_\mathcal{K}$ in Figure~\ref{graph:example:GK} represent the state-measurement availability. Consider the observation vectors
$$
y_1(t)=\matrix{c}{ x_1(t) \\ x_2(t) } \in\mathbb{R}^2, \hspace{.2in}
y_2(t)=x_2(t) \in\mathbb{R}.
$$
Clearly, the augmented observation vector obeys the structure dictated by $\mathcal{G}_\mathcal{K}$. Furthermore, since the measurement matrices are constant, they obviously satisfy Assumption~\ref{assum:2}. Finally, the controller~\eqref{eqn:controllaw} is given by
\begin{equation*}
\begin{split}
\matrix{c}{u_1(k)\\ u_2(k)}=\matrix{cc}{K_{11} & 0 \\ 0  & K_{22}} \matrix{c}{y_1(k)\\ y_2(k)},
\end{split}
\end{equation*}
where $K_{11}\in\mathbb{R}^{1\times 2}$ and $K_{22}\in\mathbb{R}$.
\hfill $\blacktriangleleft$
\end{example}

\subsection{Control Design Strategy}
Following~\cite{FLJ2012}, we define a control design strategy $\Gamma$ as a mapping from $\mathcal{A}$ to $\mathcal{K}$. Let a control design strategy $\Gamma:\mathcal{A}\rightarrow \mathcal{K}$ be partitioned following the measurement vector and the control input dimensions as
\begin{equation*}
\Gamma=\matrix{ccc}{ \Gamma_{11} & \cdots & 0 \\ \vdots & \ddots & \vdots \\ 0 & \cdots & \Gamma_{NN} },
\end{equation*}
where each block $\Gamma_{ii}$ represents a map $\mathcal{A} \rightarrow \mathbb{R}^{m_{u,i} \times o_{y,i}}$. Let a directed graph $\mathcal{G}_{\mathcal{C}}$ with adjacency matrix $S_{\mathcal{C}}$ be given. We say that the control design strategy $\Gamma$ has structure $\mathcal{G}_{\mathcal{C}}$ if $\Gamma_{ii}$, $1\leq i\leq N$, is only a function of $\left\{ \alpha_j \; | \; (s_{\mathcal{C}})_{ij} \neq 0 \right \}$. Let $\mathcal{C}$ denote the set of all control design strategies $\Gamma$ with structure $\mathcal{G}_{\mathcal{C}}$. We make the following standing assumption:
\begin{assumption} There exists a basis set $(\eta_{\ell'})_{\ell'=1}^{L'}$ such that $\Gamma\in\spanf((\eta_{\ell'})_{\ell'=1}^{L'})^{m_u\times o_y}$.
\end{assumption}

Now, we define $\mathcal{C}((\eta_{\ell'})_{\ell'=1}^{L'})=\mathcal{C}\cap \spanf((\eta_{\ell'})_{\ell'=1}^{L'})^{m_u\times o_y}$ as the set of all control design strategies over which we optimize the closed-loop performance.

\setcounter{example}{0}
\begin{example}[Cont'd] The design graph $\mathcal{G}_{\mathcal{C}}$ in Figure~\ref{graph:example:GK} illustrates the available plant model information. We use the basis functions $\eta_1(\alpha)=1$, $\eta_2(\alpha)=\alpha_1$, $\eta_3(\alpha)=\alpha_1^2$, and $\eta_4(\alpha)=\alpha_2$ for parameterizing the control design strategies. Clearly, any $\Gamma\in\mathcal{C}(\{\eta_{\ell'}\}_{\ell'=1}^4)$ can be expressed in the form
$$
\Gamma(\alpha)=\sum_{\ell'=1}^4 G^{(\ell')}\eta_{\ell'}(\alpha),
$$
with
\begin{equation*}
\begin{split}
G^{(1)}&=\matrix{ccc}{* & * & 0 \\ 0 & 0 & *}, \hspace{.2in}
G^{(2)}=\matrix{ccc}{* & * & 0 \\ 0 & 0 & 0},\\
G^{(3)}&=\matrix{ccc}{* & * & 0 \\ 0 & 0 & 0}, \hspace{.2in}
G^{(4)}=\matrix{ccc}{0 & 0 & 0 \\ 0 & 0 & *},
\end{split}
\end{equation*}
where * denotes the nonzero entries of these matrices. Note that the functions $\{\eta_{\ell'}\}_{\ell'=1}^4$ are indeed design choices and we can improve the closed-loop performance by increasing the number of the basis functions. However, this can only be achieved at the price of a higher computational time.
\hfill $\blacktriangleleft$
\end{example}

\subsection{Performance Metric}
Let us introduce the performance measure output vector
\begin{equation} \label{eqn:z(t)}
z(t)=C_zx(t)+D_{zw}w(t)+D_{zu}u(t)\in\mathbb{R}^{o_{z}}.
\end{equation}
We are interested in finding a control design method $\Gamma$ that solves the optimization problem
\begin{equation} \label{eqn:optimization}
\min_{\Gamma\in\mathcal{C}((\eta_{\ell'})_{\ell'=1}^{L'})} \max_{\alpha\in\mathcal{A}} \;\; \left\|T_{zw}\left(s;\Gamma,\alpha\right)\right\|_\infty,
\end{equation}
where $T_{zw}(s;\Gamma,\alpha)$ denotes the closed-loop transfer function from the exogenous input $w(t)$ to the performance measurement vector $z(t)$ for $\alpha\in\mathcal{A}$. We make the following assumptions to avoid singularities in the optimal control problem:

\begin{assumption} \label{assum:4} $D_{zu}^\top D_{zu}=I$ and $D_{yw} D_{yw}^\top=I$.
\end{assumption}

These assumptions are common in the $H_\infty$-control design literature~\cite[p.\,288]{zhou1998essentials}. However, notice that the conditions in Assumption~\ref{assum:4} are only sufficient (and not necessary). For instance, although $D_{yw}=0$ in Example~\ref{example:1}, as we will see later, a nontrivial solution indeed exists and the optimal control problem is in fact well-posed.

To simplify the presentation in what follows, we define the notation
$$
J(\Gamma,\alpha)=\left\|T_{zw}\left(s;\Gamma,\alpha\right)\right\|_\infty.
$$
Now, noting that there may exist many local solutions to the optimization problem~(\ref{eqn:optimization}), it is difficult to find the global solution of this problem. Hence, we define:
\begin{definition} A pair $(\Gamma^*,\alpha^*)\in\mathcal{C}((\eta_{\ell'})_{\ell'=1}^{L'}) \times \mathcal{A}$ is a saddle point of $J:\mathcal{C}((\eta_{\ell'})_{\ell'=1}^{L'})\times \alpha\rightarrow \mathbb{R}_{\geq 0}$ if there exists a constant $\epsilon\in\mathbb{R}_{>0}$ such that
$$
J(\Gamma^*,\alpha)\leq J(\Gamma^*,\alpha^*) \leq J(\Gamma,\alpha^*),
$$
for any $(\Gamma,\alpha)\in\mathcal{C}((\eta_{\ell'})_{\ell'=1}^{L'}) \times \mathcal{A}$ where $\|\Gamma-\Gamma^*\|\leq \epsilon$ and $\|\alpha-\alpha^*\|\leq \epsilon$.
\end{definition}

Evidently, the global solution of the minimax optimization problem~(\ref{eqn:optimization}) is also a saddle point of $J$. However, there might be many more saddle points. In the rest of this paper, we focus on finding a saddle point $(\Gamma^*,\alpha^*)$ of $J$. To make sure that the set of saddle points is nonempty, we make the following standing assumption:
\begin{assumption} \label{assum:5} The set of all eligible parameters $\mathcal{A}$ is a compact subset of $\mathbb{R}^p$. In addition, for any $\alpha\in\mathcal{A}$, the pair $(A(\alpha),B_u(\alpha))$ is stabilizable and the pair $(A(\alpha),C_y(\alpha))$ is detectable.
\end{assumption}
Notice that Assumption~\ref{assum:5} is only a necessary condition for the existence of a saddle point solution since we are solving a decentralized control design problem rather than a centralized one. If we switch the stabilizability and the detectability conditions with the absence of unstable fixed modes, this assumption becomes more realistic (but still not sufficient because of the asymmetric parameter dependencies).

\setcounter{example}{0}
\begin{example}[Cont'd] In this example, we are interested in minimizing the closed-loop transfer function from the exogenous inputs to the performance measurement vector with $C_z=[I_2\;\;0_{2\times 2}]^\top$, $D_{zu}=[0_{2\times 2}\;\;I_2]^\top$, and $D_{zw}=0$. Clearly, the choice of $D_{zu}$ satisfies Assumption~\ref{assum:4}. It is easy to check that the system satisfies Assumption~\ref{assum:5} as well.
\hfill $\blacktriangleleft$
\end{example}

\setcounter{footnote}{2}
\renewcommand{\thefootnote}{\fnsymbol{footnote}}

\section{Optimization Algorithm} \label{sec:results}
In this section, we develop a numerical algorithm for finding a saddle point $(\Gamma^*,\alpha^*)$ of $J$.
We start by calculating subgradients\footnote{We say that a vector $g\in\mathcal{X}$ is a subgradient of $f:\mathcal{X}\rightarrow \mathbb{R}$ at $x\in\mathcal{X}$ if for all $x'\in\mathcal{X}$, $f(x')\geq f(x)+g^\top(x'-x)$. Let $\partial f(x)$ denote the set of subgradients of $f$ at the point $x\in\mathcal{X}$. If $f$ is convex, then $\partial f(x)$ is nonempty and bounded. We would like refer interested readers to~\cite{shor1985minimization,boyd2003subgradient} (and the references therein) for a detailed review of the subgradients and numerical optimization algorithm using them.} $\Delta \Gamma\in\partial_{\Gamma}J(\Gamma,\alpha)$ and $\Delta \alpha \in \partial_{\alpha}J(\Gamma,\alpha)$ for any $(\Gamma,\alpha)\in\mathcal{C}((\eta_{\ell'})_{\ell'=1}^{L'}) \times \mathcal{A}$.

\begin{lemma} \label{lemma:1} Let us define the transfer functions in
\begin{equation} \label{eqn:tfneeded:1}
\begin{split}
&\matrix{cc}{\hspace{-.05in}T_{zw}(s;\Gamma,\alpha)\hspace{-.05in} & \hspace{-.05in}G_{12}(s;\Gamma,\alpha) \\ G_{21}(s;\Gamma,\alpha)\hspace{-.05in} & \bullet } = \matrix{c}{ C'_{\mathrm{cl}}(\Gamma,\alpha) \\C_{y'}(\alpha)}\left(sI-A'_{\mathrm{cl}}(\Gamma,\alpha)\right)^{-1} \matrix{cc}{B'_{\mathrm{cl}}(\Gamma,\alpha) & B_u(\alpha)}+\matrix{cc}{\hspace{-.05in}D'_{\mathrm{cl}}(\Gamma,\alpha) \hspace{-.05in} &\hspace{-.05in}D_{zu}\hspace{-.05in} \\ \hspace{-.05in}D_{y'w}(\alpha) & \bullet},
\end{split}
\end{equation}
with
\begin{equation*}
\begin{split}
A'_{\mathrm{cl}}(\Gamma,\alpha)&=A(\alpha) +B_u(\alpha)K'C_{y'}(\alpha), \\
B'_{\mathrm{cl}}(\Gamma,\alpha)&=B_w(\alpha) +B_u(\alpha)K'D_{y'w}(\alpha)\\
C'_{\mathrm{cl}}(\Gamma,\alpha)&=C_z(\alpha) +D_{zu}(\alpha)K'C_{y'}(\alpha), \\
D'_{\mathrm{cl}}(\Gamma,\alpha)&=D_{zw}(\alpha) +D_{zu}(\alpha)K'D_{y'w}(\alpha),
\end{split}
\end{equation*}
where $K'=[G^{(1)}\;\cdots\;G^{(L')}]$ and
$$
C_{y'}(\alpha)\hspace{-.05in}=\hspace{-.05in}\matrix{c}{
\hspace{-.05in}\eta_1(\alpha) C_y(\alpha) \hspace{-.05in}\\ \vdots \\ \hspace{-.05in}\eta_{L'}(\alpha) C_y(\alpha)\hspace{-.05in}}\hspace{-.05in},\;\;
D_{y'w}(\alpha)\hspace{-.05in}=\hspace{-.05in}\matrix{c}{ \hspace{-.05in}\eta_1(\alpha) D_{yw}(\alpha)\hspace{-.05in} \\ \vdots \\ \hspace{-.05in}\eta_{L'}(\alpha) D_{yw}(\alpha)\hspace{-.05in}}\hspace{-.05in}.
$$
Furthermore, let $\Delta \Gamma=\sum_{\ell'=1}^{L'} \Delta G^{(\ell')}$ be such that $\Delta G^{(\ell')}\in\mathbb{R}^{m\times o_y}$ are defined in
\begin{equation} \label{eqn:DeltaG}
\begin{split}
&\matrix{ccc}{\Delta G^{(1)} & \cdots & \Delta G^{(L')} }=\left\|T_{zw}\left(s;\Gamma,\alpha \right)\right\|_\infty^{-1}\sum_{\nu=1}^q \mathrm{Re}\left\{G_{21}(j\omega_\nu;\Gamma,\alpha)T_{zw} (j\omega_\nu;\Gamma,\alpha)^*Q_\nu Y_\nu Q_\nu^* G_{12}(j\omega_\nu;\Gamma,\alpha)\right\}^\top\hspace{-.1in},
\end{split}
\end{equation}
where $\|T_{zw}(s;\Gamma,\alpha)\|_\infty$ is attained at a finite number of frequencies $(\omega_1,\dots,\omega_q)$ and $(Y_1,\dots,Y_q)\in\mathbb{B}^q$. In addition, the columns of $Q_\nu$, $1\leq \nu\leq q$, are chosen so as to form an orthonormal basis for the eigenspace of $T_{zw}(j\omega_\nu;\Gamma,\alpha) T_{zw}(j\omega_\nu;\Gamma,\alpha)^*$ associated with the leading eigenvalue $\|T_{zw}(s;\Gamma,\alpha)\|_\infty$. Then, $\Delta \Gamma\in\partial_{\Gamma}J(\Gamma,\alpha)$.
\end{lemma}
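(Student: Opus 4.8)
The plan is to reduce the parameter-dependent, structured design problem to a \emph{standard static output-feedback} problem and then invoke the expression for the (Clarke) subdifferential of the $H_\infty$-norm with respect to a static gain derived in~\cite{Apkarian1576856}. Essentially all of the work is the reduction; the nonsmooth-analysis content is imported.

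First I would rewrite the closed loop. For $\Gamma\in\mathcal{C}((\eta_{\ell'})_{\ell'=1}^{L'})$, Assumption~3 gives $\Gamma(\alpha)=\sum_{\ell'=1}^{L'}G^{(\ell')}\eta_{\ell'}(\alpha)$, so the control law $u=\Gamma(\alpha)y$ can be written as $u=K'y'$ with $K'=[G^{(1)}\;\cdots\;G^{(L')}]$ and $y'=C_{y'}(\alpha)x+D_{y'w}(\alpha)w$, where $C_{y'}$ and $D_{y'w}$ are the stacked matrices in the statement. Substituting $u=K'y'$ into $\dot x=A(\alpha)x+B_w(\alpha)w+B_u(\alpha)u$, $z=C_z x+D_{zw}w+D_{zu}u$, and $y'=C_{y'}(\alpha)x+D_{y'w}(\alpha)w$, a direct computation shows the closed-loop realization has matrices exactly $A'_{\mathrm{cl}},B'_{\mathrm{cl}},C'_{\mathrm{cl}},D'_{\mathrm{cl}}$, so that $T_{zw}(s;\Gamma,\alpha)=C'_{\mathrm{cl}}(sI-A'_{\mathrm{cl}})^{-1}B'_{\mathrm{cl}}+D'_{\mathrm{cl}}$, i.e. the $(1,1)$ block of~\eqref{eqn:tfneeded:1}. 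By the same partition, the block $G_{12}$ is the closed-loop transfer function from the control channel $u$ to the performance output $z$, and $G_{21}$ is the one from the disturbance $w$ to the fictitious measurement $y'$; these are precisely the auxiliary channels that enter the subgradient formula for static output feedback.

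Next I would fix $\alpha$ and view $J(\cdot,\alpha)=\|T_{zw}(s;\cdot,\alpha)\|_\infty$ as a function of the single static gain $K'$. On the open set of gains for which $A'_{\mathrm{cl}}$ is Hurwitz (nonempty by Assumption~5 and the stabilizability/detectability hypotheses, after closing an inner loop), the map $K'\mapsto T_{zw}(\cdot;K',\alpha)$ is rational hence smooth, while $\|\cdot\|_\infty$ is the pointwise-over-$\omega$ maximum of the largest singular value, a convex nonsmooth function of the transfer matrix. Composing these and applying the chain rule of~\cite{Apkarian1576856}: if the $H_\infty$-norm is attained at the finitely many frequencies $\omega_1,\dots,\omega_q$ and the columns of $Q_\nu$ form an orthonormal basis of the eigenspace of $T_{zw}(j\omega_\nu)T_{zw}(j\omega_\nu)^*$ for the leading eigenvalue $\|T_{zw}\|_\infty^2$, then every subgradient of $J(\cdot,\alpha)$ at $K'$ has the form $\|T_{zw}\|_\infty^{-1}\sum_{\nu=1}^q\mathrm{Re}\{G_{21}(j\omega_\nu)T_{zw}(j\omega_\nu)^*Q_\nu Y_\nu Q_\nu^*G_{12}(j\omega_\nu)\}^\top$ for some $(Y_1,\dots,Y_q)\in\mathbb{B}^q$; a dimension check confirms this object is $m\times(L'o_y)$, conformal with $K'$. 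Partitioning its columns into $L'$ blocks of width $o_y$ yields exactly the matrices $\Delta G^{(\ell')}$ of~\eqref{eqn:DeltaG}. Finally, passing from a subgradient in $K'$ to one in $\Gamma$ is immediate: $\mathcal{C}((\eta_{\ell'})_{\ell'=1}^{L'})$ embeds linearly into the space of all gain tuples (its elements are the $(G^{(\ell')})$ obeying the sparsity dictated by $\mathcal{G}_{\mathcal{C}}$ and by $K\in\mathcal{K}$), and the subgradient inequality, valid for all gains, holds a fortiori when the comparison point is restricted to that subspace, so $\Delta\Gamma=\sum_{\ell'}\Delta G^{(\ell')}\in\partial_\Gamma J(\Gamma,\alpha)$.

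The genuinely delicate step is the second one — the subdifferential formula for $\|T_{zw}(s;\cdot,\alpha)\|_\infty$ as a function of the static gain. I would not reprove it but quote~\cite{Apkarian1576856}; the points to verify in doing so are that the auxiliary transfer functions in their formula coincide with the $G_{12},G_{21}$ read off from the partition in~\eqref{eqn:tfneeded:1}, that the normalization by $\|T_{zw}\|_\infty$ (arising from differentiating $\sigma_{\max}=\sqrt{\lambda_{\max}(\cdot\,\cdot^*)}$) is correct, and that the weight set $\mathbb{B}^q$ of positive semidefinite blocks of unit total trace is precisely the convex-combination set produced by the Clarke subdifferential of the max-eigenvalue function aggregated over the active frequencies. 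A secondary observation worth one line is that compactness of $\mathcal{A}$ and Assumption~5 keep $J$ finite and the above objects well-defined over the domain of interest.
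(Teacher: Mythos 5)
Your proposal is correct and follows essentially the same route as the paper's own (sketched) proof: rewrite the loop $u=\Gamma(\alpha)y$ as a single static gain $K'=[G^{(1)}\;\cdots\;G^{(L')}]$ acting on the stacked fictitious output $y'=C_{y'}(\alpha)x+D_{y'w}(\alpha)w$, verify the closed-loop realization matches $(A'_{\mathrm{cl}},B'_{\mathrm{cl}},C'_{\mathrm{cl}},D'_{\mathrm{cl}})$, import the $H_\infty$ subgradient formula of~\cite{Apkarian1576856} for static output feedback, and then pass the blockwise subgradients $\Delta G^{(\ell')}$ back to a subgradient with respect to $\Gamma$. Your added remarks on dimension checks and on restricting the subgradient inequality to the structured subspace are consistent with (and slightly more explicit than) the paper's sketch.
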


\begin{proof} Due to space constraints, we only present a sketch of the proof here. First, we prove that the closed-loop system
\begin{equation*}
\left\{\begin{array}{rl}\dot{x}(t)&\hspace{-.1in}=A(\alpha)x(t)+ B_w(\alpha)w(t)+B_u(\alpha)u(t),
\\ z(t)&\hspace{-.1in}=C_zx(t)+D_{zw}w(t)+D_{zu}u(t), \\ y'(t)&\hspace{-.1in}=C_{y'}(\alpha)x(t)+D_{y'w}(\alpha)w(t),\\ u(t)&\hspace{-.1in}=K'y'(t), \end{array} \right.
\end{equation*}
is equivalent to the closed-loop system that we introduced in the previous section. Then, we can use the method presented in~\cite{Apkarian1576856} for calculating the subgradients of the closed-loop performance with respect to the controller gain. Doing so, we find $\Delta G^{(\ell')}\in\partial_{G^{\ell'}}J(\Gamma,\alpha)$ for $1\leq \ell'\leq L'$. Finally, we get $\sum_{\ell'=1}^{L'} \Delta G^{(\ell')}\eta_{\ell'}\in\partial_{\Gamma}J(\Gamma,\alpha)$.
\end{proof}

\begin{lemma} Let us define the transfer functions in
\begin{equation} \label{eqn:tfneeded:2}
\begin{split}
\matrix{cc}{T_{zw}(s;\Gamma,\alpha) & H_{12}(s;\Gamma,\alpha) \\ H_{21}(s;\Gamma,\alpha) & \bullet }= \matrix{c}{ C''_{\mathrm{cl}}(\Gamma,\alpha) \\C_{y''}}\left(sI-A''_{\mathrm{cl}}(\Gamma,\alpha)\right)^{-1} \matrix{cc}{B''_{\mathrm{cl}}(\Gamma,\alpha) & B_{u''}}+\matrix{cc}{D''_{\mathrm{cl}}(\Gamma,\alpha) & D_{zu''} \\ D_{y''w} & \bullet},
\end{split}
\end{equation}
with
\begin{equation*}
\begin{split}
A''_{\mathrm{cl}}(\Gamma,\alpha)&=B_{u''}K''(\alpha)C_{y''}, \\
B''_{\mathrm{cl}}(\Gamma,\alpha)&=B_{u''}K''(\alpha)D_{y''w}, \\
C''_{\mathrm{cl}}(\Gamma,\alpha)&=C_z+D_{zu''}K''(\alpha)C_{y''}, \\
D''_{\mathrm{cl}}(\Gamma,\alpha)&=D_{zw}+D_{zu''}K''(\alpha)D_{y''w},
\end{split}
\end{equation*}
where
$$
C_{y''}=\matrix{c}{A^{(1)}\\ \vdots \\ A^{(L)} \\ \mathbf{1}_{L+1}\otimes \matrix{c}{G^{(1)} C_y^{(1)} \\ G^{(1)} C_y^{(2)} \\ \vdots \\ G^{(1)} C_y^{(L)} \\ \vdots \\ G^{(L')} C_y^{(L)}} \\ 0_{(nL+m_uL(L+1)L')\times n}}\hspace{-.04in},
\hspace{.5in}
D_{y''w}=\matrix{c}{0_{(nL+m_uL(L+1)L')\times m_w} \\ B_w^{(1)}\\ \vdots \\ B_w^{(L)} \\ \mathbf{1}_{L+1}\otimes \matrix{c}{G^{(1)} D_{yw}^{(1)} \\ G^{(1)} D_{yw}^{(2)} \\ \vdots \\ G^{(1)} D_{yw}^{(L)}\\ \vdots \\ G^{(L')} D_{yw}^{(L)} }}\hspace{-.04in},
$$
$$
D_{zu''}=\matrix{c}{0_{(nL+m_uL^2L') \times o_z} \\ \mathbf{1}_{LL'} \otimes D_{zu}^\top \\ 0_{(nL+m_uL^2L') \times o_z} \\ \mathbf{1}_{LL'} \otimes D_{zu}^\top}^\top,
$$
and
$$
B_{u''}=\big[\mathbf{1}_{L}^\top \otimes I_{n\times n} \; \Upsilon \;\; 0_{n\times nLL'} \; \mathbf{1}_{L}^\top \otimes I_{n\times n} \; \Upsilon \;\; 0_{n\times nLL'}\big]^\top,
$$
with
$$
\Upsilon=\big[\mathbf{1}_{LL'}^\top \otimes B_{u}^{(1)\top} \;\; \cdots \;\; \mathbf{1}_{LL'}^\top \otimes B_{u}^{(L)\top}\big].
$$
Additionally, we have
\begin{equation*}
\begin{split}
K''(\alpha)=\diag(&\Xi(\alpha)\otimes I_{n}, \,\Xi(\alpha)\otimes\Psi(\alpha)\otimes\Xi(\alpha)\otimes I_{m_u}, \\ & \Psi(\alpha)\otimes\Xi(\alpha)\otimes I_{m_u},\,  \Xi(\alpha)\otimes I_{n},  \Xi(\alpha)\otimes\Psi(\alpha)\otimes\Xi(\alpha)\otimes I_{m_u}, \Psi(\alpha)\otimes\Xi(\alpha)\otimes I_{m_u}).
\end{split}
\end{equation*}
where, for all $\alpha\in\mathbb{R}^p$, $\Xi(\alpha)=\diag(\xi_{1}(\alpha),\dots,\xi_{L}(\alpha))$ and $\Psi(\alpha)=\diag(\eta_{1}(\alpha),\dots,\eta_{L'}(\alpha))$.
Furthermore, let $\Delta \alpha=[\Delta \alpha_1 \; \cdots \; \Delta \alpha_p]^\top$ be such that the scalars $\Delta \alpha_i\in\mathbb{R}$, $1\leq i\leq p$, are calculated using
\begin{equation} \label{eqn:Deltaalpha}
\begin{split}
\Delta \alpha_i=&
\left\|T_{zw}\left(s;\Gamma,\alpha\right)\right\|_\infty^{-1} \sum_{\nu=1}^q \mathrm{Re}\left\{ \trace \left[H_{21}(j\omega_\nu;\Gamma,\alpha)T_{zw}(j\omega_\nu;\Gamma,\alpha)^*Q_\nu Y_\nu Q_\nu^* H_{12}(j\omega_\nu;\Gamma,\alpha) \frac{\partial}{\partial \alpha_i} K''(\alpha) \right] \right\},
\end{split}
\end{equation}
where $\|T_{zw}(s;\Gamma,\alpha)\|_\infty$ is attained at a finite number of frequencies $(\omega_1,\dots,\omega_q)$ and $(Y_1,\dots,Y_q)\in\mathbb{B}^q$. In addition, the columns of $Q_\nu$, $1\leq \nu\leq q$, form an orthonormal basis of the eigenspace of $T_{zw}(j\omega_\nu;\Gamma,\alpha) T_{zw}(j\omega_\nu;\Gamma,\alpha)^*$ associated with the leading eigenvalue $\|T_{zw}(s;\Gamma,\alpha)\|_\infty$. Then, $\Delta \alpha \in \partial_{\alpha}J(\Gamma,\alpha)$.
\end{lemma}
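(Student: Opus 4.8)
The plan is to follow the route of Lemma~\ref{lemma:1}: first realize the closed-loop transfer function by a state-space model in which the parameter vector $\alpha$ enters \emph{only} through a single gain matrix $K''(\alpha)$ that depends on $\alpha$ in a known, continuously differentiable way; then apply the subgradient formula of~\cite{Apkarian1576856}, exactly as in Lemma~\ref{lemma:1}, to compute a subgradient of $J$ with respect to $K''$; and finally pass this subgradient through the smooth map $\alpha\mapsto K''(\alpha)$ by a chain rule to obtain a subgradient of $J$ with respect to $\alpha$. The main obstacle is the first step --- constructing and verifying the Kronecker-structured matrices $B_{u''},C_{y''},D_{zu''},D_{y''w},K''(\alpha)$ in \eqref{eqn:tfneeded:2} --- since once this is done the remaining two steps reuse Lemma~\ref{lemma:1} almost verbatim.

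For the realization, use Assumptions~\ref{assum:1}--\ref{assum:2} to expand $A(\alpha)=\sum_{\ell=1}^{L}\xi_\ell(\alpha)A^{(\ell)}$ and likewise $B_w,B_u,C_y,D_{yw}$, and write $\Gamma(\alpha)=\sum_{\ell'=1}^{L'}\eta_{\ell'}(\alpha)G^{(\ell')}$. The closed-loop data of Lemma~\ref{lemma:1}, namely $A'_{\mathrm{cl}}=A(\alpha)+B_u(\alpha)\Gamma(\alpha)C_y(\alpha)$, $B'_{\mathrm{cl}}=B_w(\alpha)+B_u(\alpha)\Gamma(\alpha)D_{yw}(\alpha)$, $C'_{\mathrm{cl}}=C_z+D_{zu}\Gamma(\alpha)C_y(\alpha)$ and $D'_{\mathrm{cl}}=D_{zw}+D_{zu}\Gamma(\alpha)D_{yw}(\alpha)$, then become matrix-valued polynomials of degree at most three in the scalars $\xi_\ell(\alpha),\eta_{\ell'}(\alpha)$; for instance $A'_{\mathrm{cl}}=\sum_{\ell}\xi_\ell A^{(\ell)}+\sum_{\ell,\ell',\ell''}\xi_\ell\eta_{\ell'}\xi_{\ell''}\,B_u^{(\ell)}G^{(\ell')}C_y^{(\ell'')}$. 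Each monomial of such an expression factors as $M_1\big(\Xi(\alpha)\otimes\Psi(\alpha)\otimes\Xi(\alpha)\otimes I\big)M_2$ (or the corresponding lower-degree version) with constant matrices $M_1,M_2$ assembled from the $A^{(\ell)},B_u^{(\ell)},C_y^{(\ell)},D_{yw}^{(\ell)},G^{(\ell')},D_{zu}$ and Kronecker factors $\mathbf 1$. Collecting every parameter-dependent factor into one block-diagonal matrix yields precisely the $K''(\alpha)$ of the statement, and routing the associated signals through the constant matrices $B_{u''},C_{y''},D_{zu''},D_{y''w}$ produces \eqref{eqn:tfneeded:2} --- equivalently, in view of the identities $A''_{\mathrm{cl}}=B_{u''}K''(\alpha)C_{y''}$, $B''_{\mathrm{cl}}=B_{u''}K''(\alpha)D_{y''w}$, $C''_{\mathrm{cl}}=C_z+D_{zu''}K''(\alpha)C_{y''}$, $D''_{\mathrm{cl}}=D_{zw}+D_{zu''}K''(\alpha)D_{y''w}$, the system in \eqref{eqn:tfneeded:2} is the constant generalized plant with state matrix $0$, input matrices $0$ and $B_{u''}$, output matrices $C_z$ and $C_{y''}$, feedthroughs $D_{zw},D_{zu''},D_{y''w}$ and zero $u''\!\to\!y''$ feedthrough, closed by the static feedback $u''=K''(\alpha)y''$. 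What must be checked is that this interconnection reproduces $T_{zw}(s;\Gamma,\alpha)$ in its $(1,1)$ block and that the off-diagonal blocks are the auxiliary transfer functions $H_{12},H_{21}$ appearing in \eqref{eqn:Deltaalpha}; this is a direct but lengthy verification in Kronecker algebra, and it is where essentially all the effort goes.

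Once \eqref{eqn:tfneeded:2} is in place, $K''$ appears in $A''_{\mathrm{cl}},B''_{\mathrm{cl}},C''_{\mathrm{cl}},D''_{\mathrm{cl}}$ in the same affine fashion as $K'$ did in \eqref{eqn:tfneeded:1}, so the argument of~\cite{Apkarian1576856} invoked in Lemma~\ref{lemma:1} carries over with $K''$ in place of $K'$ and with $(H_{12},H_{21})$ in place of $(G_{12},G_{21})$: a first-order perturbation $\delta K''$ gives $\delta T_{zw}(j\omega_\nu)=H_{12}(j\omega_\nu)\,\delta K''\,H_{21}(j\omega_\nu)$ at each peak frequency, and evaluating the subdifferential of $\|\cdot\|_\infty$ there produces
$$
\Delta K''=\left\|T_{zw}(s;\Gamma,\alpha)\right\|_\infty^{-1}\sum_{\nu=1}^{q}\mathrm{Re}\left\{H_{21}(j\omega_\nu;\Gamma,\alpha)T_{zw}(j\omega_\nu;\Gamma,\alpha)^{*}Q_\nu Y_\nu Q_\nu^{*}H_{12}(j\omega_\nu;\Gamma,\alpha)\right\}^{\top}\in\partial_{K''}J(\Gamma,\alpha),
$$
for suitable $(Y_1,\dots,Y_q)\in\mathbb{B}^q$ and $Q_\nu$ as in the statement; here $J$ is understood as extended to all matrices of the size of $K''$ by evaluating $\|T_{zw}\|_\infty$ on \eqref{eqn:tfneeded:2}, and this extension is locally Lipschitz and Clarke-regular, being $\|\cdot\|_\infty$ composed with a rational map of $K''$. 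Since $\xi_\ell$ and $\eta_{\ell'}$ have continuous first derivatives, $\alpha\mapsto K''(\alpha)$ is $C^1$, so the chain rule for Clarke subdifferentials gives $\partial_\alpha J(\Gamma,\alpha)=[DK''(\alpha)]^{*}\,\partial_{K''}J(\Gamma,\alpha)$, with equality thanks to the regularity of $J$ in $K''$. The $i$-th component of $[DK''(\alpha)]^{*}\Delta K''$ equals $\trace\big[(\Delta K'')^{\top}\,\partial_{\alpha_i}K''(\alpha)\big]$; substituting $\Delta K''$, using that $\trace$ and $\mathrm{Re}$ commute with the real matrix $\partial_{\alpha_i}K''(\alpha)$, and cancelling the double transpose yields exactly \eqref{eqn:Deltaalpha}, so $\Delta\alpha\in\partial_\alpha J(\Gamma,\alpha)$. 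In short, the hard part is Step~1, while Steps~2--3 are a verbatim repetition of Lemma~\ref{lemma:1} followed by a smooth-outer-map chain rule.
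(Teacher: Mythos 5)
Your proposal is correct and follows essentially the same route as the paper's (very terse) proof, which simply invokes the same line of reasoning as Lemma~\ref{lemma:1}: rewrite the closed loop as a constant generalized plant in feedback with the parameter-dependent gain $K''(\alpha)$, apply the subgradient formula of~\cite{Apkarian1576856} with respect to that gain, and push the result through the smooth map $\alpha\mapsto K''(\alpha)$ by the chain rule. Your write-up is in fact more explicit than the paper's sketch (notably the Clarke-regularity justification of the chain-rule step), but it is the same argument.
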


\begin{proof} The proof follows the same line of reasoning as in the proof of Lemma~\ref{lemma:1}.
\end{proof}

\begin{algorithm}[t]
\caption{\label{alg:2} A numerical algorithm for calculating a saddle point $(\Gamma^*,\alpha^*)$ of $J$.}
\begin{algorithmic}[1]
\REQUIRE $\{G^{(\ell')}(0)\}_{\ell'=0}^{L'}$, $\alpha(0)$ , $\epsilon,\varepsilon\in\mathbb{R}_{>0}$, $\{\mu_k\}_{k=1}^\infty$
\ENSURE $\Gamma^*$, $\alpha^*$
\STATE $k\leftarrow 0$
\REPEAT
\STATE $\Gamma^{(k)}\leftarrow\sum_{\ell'=1}^{L'} G^{(\ell')}(k)\eta_{\ell'}$
\STATE $\bar{\alpha}(0)\leftarrow \alpha(k)$
\STATE $\tau\leftarrow 0$
\REPEAT
\STATE $\bar{\alpha}(\tau+1)\leftarrow P_\mathcal{A}(\bar{\alpha}(\tau) + \mu_{\tau} g_{k,\tau})$ where $ g_{k,\tau}\in \partial_{\alpha}J(\Gamma^{(k)},\alpha)$ calculated at $\bar{\alpha}(\tau)$ and $P_\mathcal{A}(\cdot)$ is the projection to $\mathcal{A}$
\STATE $\tau\leftarrow \tau+1$
\UNTIL{$|J(\Gamma^{(k)},\bar{\alpha}(\tau))- J(\Gamma^{(k)},\bar{\alpha}(\tau-1))|\leq \varepsilon$}
\STATE $\alpha(k+1)\leftarrow \bar{\alpha}(\tau)$
\FOR{$\ell'=1,\dots,L'$}
\STATE $G^{(\ell')}(k+1)\leftarrow P_\mathcal{C}(G^{(\ell')}(k) - \mu_k \Delta G^{(\ell')}(k))$ where $\Delta G^{(\ell')}(k)\in\partial_{G^{\ell'}}J(\Gamma,\alpha(k+1))$ calculated at $\Gamma^{(k)}$ and $P_\mathcal{C}(\cdot)$ is the projection to $\mathcal{C}((\eta_{\ell'})_{\ell'=1}^{L'})$
\ENDFOR
\STATE $k\leftarrow k+1$
\UNTIL{$|J(\Gamma^{(k-1)},\alpha(k-1))-J(\Gamma^{(k)},\alpha(k))|\leq \epsilon$}
\STATE $\Gamma^*\leftarrow \sum_{\ell'=1}^{L'} G^{(\ell')}(k)\eta_{\ell'}$
\STATE $\alpha^*\leftarrow \alpha(k)$
\end{algorithmic}
\end{algorithm}

Algorithm~\ref{alg:2} introduces a numerical algorithm for finding a saddle point of $J$, or equivalently, a local solution of the optimization problem in~(\ref{eqn:optimization}).

\begin{theorem} In Algorithm~\ref{alg:2}, let $\{\mu_k\}_{k=0}^\infty$ be chosen such that $ \lim_{k\rightarrow \infty}\sum_{z=1}^k \mu_z=\infty$ and $\lim_{k\rightarrow \infty}\sum_{z=1}^k \mu_z^2<\infty$. Assume that there exists $C\in\mathbb{R}$ such that $\left\|g_{k,\tau}\right\|_2\leq C$ and $\|\Delta G^{(\ell')}(k)\|_2\leq C$ for all $k,\tau\geq 0$ and $1\leq \ell' \leq L'$. Then, if $\lim_{k\rightarrow \infty}(\Gamma^{(k)},\alpha(k))$ exists, it is a saddle point of $J$.
\end{theorem}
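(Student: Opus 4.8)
The plan is to read Algorithm~\ref{alg:2} as two nested projected-subgradient recursions and to combine the classical convergence analysis of such recursions (under the step-size conditions $\sum_k\mu_k=\infty$, $\sum_k\mu_k^2<\infty$ and the assumed subgradient bound $C$) with the subgradient formulas of Lemma~\ref{lemma:1} and the subsequent lemma. The inner loop is projected subgradient \emph{ascent} of the map $\alpha\mapsto J(\Gamma^{(k)},\alpha)$ over the compact set $\mathcal{A}$, and the controller update is one projected subgradient \emph{descent} step of $\Gamma\mapsto J(\Gamma,\alpha(k+1))$ over $\mathcal{C}((\eta_{\ell'})_{\ell'=1}^{L'})$ which, in the $(G^{(\ell')})$-coordinates, is a linear subspace, so $P_\mathcal{C}$ is a well-defined nonexpansive projection. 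The goal is to show that, at the assumed limit $(\Gamma^*,\alpha^*)=\lim_k(\Gamma^{(k)},\alpha(k))$, the first recursion yields $J(\Gamma^*,\alpha)\le J(\Gamma^*,\alpha^*)$ for all $\alpha$ near $\alpha^*$ in $\mathcal{A}$, and the second yields $J(\Gamma^*,\alpha^*)\le J(\Gamma,\alpha^*)$ for all $\Gamma$ near $\Gamma^*$ in $\mathcal{C}((\eta_{\ell'})_{\ell'=1}^{L'})$, which is exactly the saddle-point inequality in the Definition.

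First I would handle the inner loop for a fixed $\Gamma^{(k)}$. Using the nonexpansiveness of $P_\mathcal{A}$ together with the supergradient inequality for $J(\Gamma^{(k)},\cdot)$ (concavity, or its local version near the limit), one gets the standard potential bound
\[
\|\bar\alpha(\tau+1)-\beta\|_2^2\le\|\bar\alpha(\tau)-\beta\|_2^2-2\mu_\tau\bigl(J(\Gamma^{(k)},\beta)-J(\Gamma^{(k)},\bar\alpha(\tau))\bigr)+\mu_\tau^2C^2
\]
for every $\beta\in\mathcal{A}$. Telescoping over $\tau$, using $\sum_\tau\mu_\tau^2<\infty$ to absorb the last term and $\sum_\tau\mu_\tau=\infty$ to force the weighted gaps to zero, gives $\limsup_\tau J(\Gamma^{(k)},\bar\alpha(\tau))=\max_{\beta\in\mathcal{A}}J(\Gamma^{(k)},\beta)$; combined with the termination test $|J(\Gamma^{(k)},\bar\alpha(\tau))-J(\Gamma^{(k)},\bar\alpha(\tau-1))|\le\varepsilon$ (interpreting the fixed tolerance $\varepsilon$ as a vanishing sequence) this shows the returned $\alpha(k+1)$ satisfies $J(\Gamma^{(k)},\alpha)\le J(\Gamma^{(k)},\alpha(k+1))$ for all $\alpha\in\mathcal{A}$. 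An identical argument applied to the controller update --- now with $\Delta\Gamma^{(k)}=\sum_{\ell'}\Delta G^{(\ell')}(k)\eta_{\ell'}\in\partial_\Gamma J(\Gamma^{(k)},\alpha(k+1))$ by Lemma~\ref{lemma:1}, the projection $P_\mathcal{C}$, and convexity of $J(\cdot,\alpha(k+1))$ --- gives $J(\Gamma,\alpha(k+1))\ge J(\Gamma^{(k+1)},\alpha(k+1))$ for all $\Gamma\in\mathcal{C}((\eta_{\ell'})_{\ell'=1}^{L'})$ in the limit $k\to\infty$ (the fixed outer tolerance $\epsilon$ again taken to zero).

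Next I would pass to the limit. Along the generated sequence the closed loop is internally stable and $J$ is finite --- this is where Assumption~\ref{assum:5} and the boundedness of the iterates enter --- and on this domain $J$ is continuous in $(\Gamma,\alpha)$; hence $J(\Gamma^{(k)},\alpha(k+1))\to J(\Gamma^*,\alpha^*)$ and the two inequalities above, evaluated at the limit, give $J(\Gamma^*,\alpha)\le J(\Gamma^*,\alpha^*)$ for every $\alpha$ in a neighbourhood of $\alpha^*$ in $\mathcal{A}$ and $J(\Gamma^*,\alpha^*)\le J(\Gamma,\alpha^*)$ for every $\Gamma$ in a neighbourhood of $\Gamma^*$ in $\mathcal{C}((\eta_{\ell'})_{\ell'=1}^{L'})$. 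Choosing a common $\epsilon>0$ for the two neighbourhoods, this is precisely the saddle-point condition, which finishes the argument.

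The main obstacle is that $J$ is genuinely nonconvex in $\Gamma$ and nonconcave in $\alpha$ (the $H_\infty$-norm of a structured output-feedback closed loop is not convex in the gain, and it is certainly not concave in the plant parameters), so the subgradient recursions only control $J$ along feasible directions \emph{locally} and, strictly speaking, return Clarke-stationarity rather than a one-sided optimum. Making the chain above rigorous therefore requires working with the \emph{local} saddle notion of the Definition and exploiting local convexity/concavity of the relevant sections of $J$ near $(\Gamma^*,\alpha^*)$ (or else weakening the conclusion to first-order criticality); this, together with justifying continuity and finiteness of $J$ at the limit and treating the fixed tolerances $\epsilon,\varepsilon$ as vanishing, is the delicate part. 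A secondary point is that the standard nonexpansive-projection analysis needs $\mathcal{A}$ convex, so one either assumes this (as in the running example, where $\mathcal{A}$ is a box) or replaces $P_\mathcal{A}$ by an appropriate set-valued projection and argues locally.
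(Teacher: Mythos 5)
Your proposal takes essentially the same route as the paper: the paper's proof is a one-line appeal to the convergence properties of conventional projected subgradient methods under the step-size conditions $\sum_k \mu_k = \infty$, $\sum_k \mu_k^2 < \infty$ (citing the standard subgradient-method notes), which is exactly the nested ascent/descent analysis you spell out. Your added caveats --- nonconvexity of $J$ in $\Gamma$ and nonconcavity in $\alpha$, the fixed tolerances $\epsilon,\varepsilon$, and the convexity of $\mathcal{A}$ needed for the projection --- are issues the paper's terse proof simply glosses over, so your more detailed (and more honest) account is consistent with, not divergent from, the paper's argument.
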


\begin{proof} The proof follows from the convergence properties of conventional subgradient optimization algorithms~\cite{boyd2003subgradient}. \end{proof}

\begin{figure*}[t!]
\centering
$$
\begin{array}{cp{.4in}cp{.4in}c}
\includegraphics[width=0.35\linewidth]{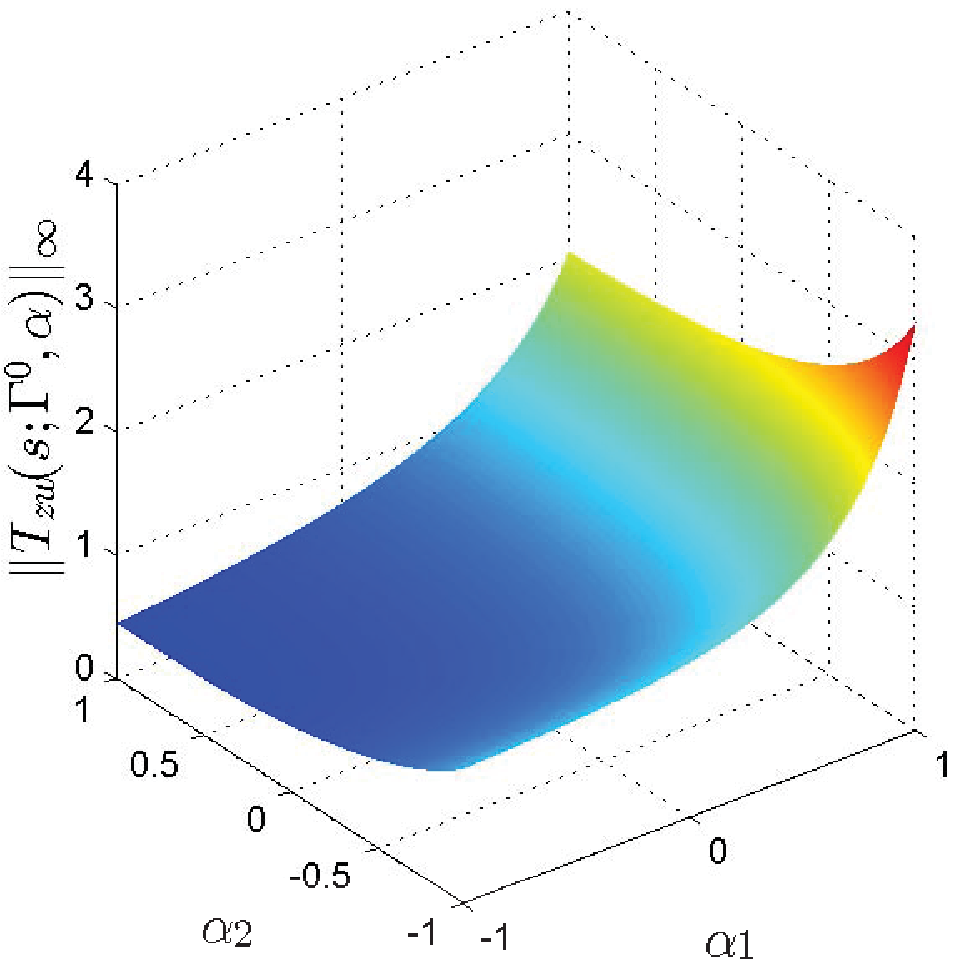}&\hspace{.9in}&
\includegraphics[width=0.35\linewidth]{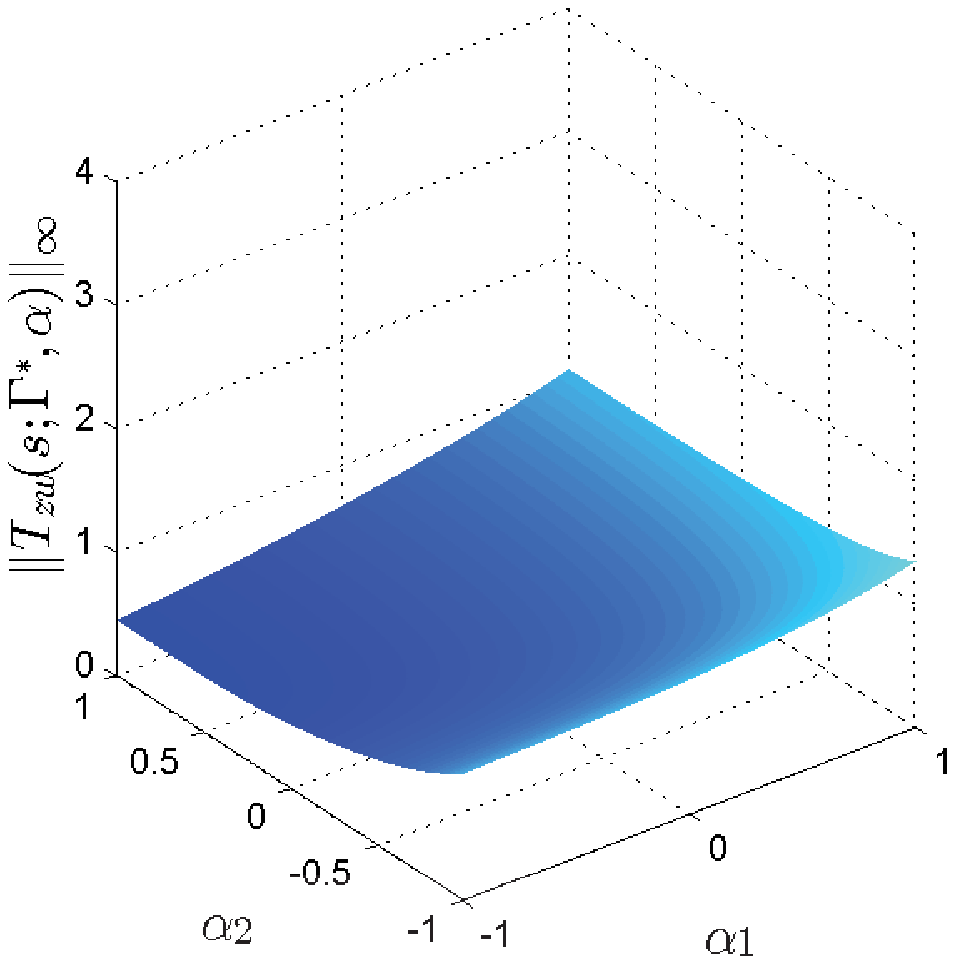}&\hspace{.9in}&
\includegraphics[width=0.06\linewidth]{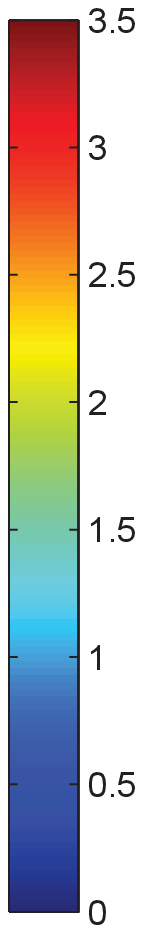}
\end{array}
$$
\caption{\label{figure1} The initial closed-loop performance $\|T_{zw}(s;\Gamma^0,\alpha)\|_\infty$ (left) and the optimal closed-loop performance $\|T_{zw}(s;\Gamma^*,\alpha)\|_\infty$ (right) as function of the parameters $\alpha_i$, $i=1,2$.}
\vspace{-.2in}
\end{figure*}

\setcounter{example}{0}
\begin{example}[Cont'd] Let us initialize Algorithm~\ref{alg:2} at $\alpha(0)=[0.0 \; -0.0 ]^\top$ and
$$
\Gamma^0(\alpha)=\matrix{
>{\centering\arraybackslash$} p{1.3cm} <{$} >{\centering\arraybackslash$} p{1.3cm} <{$} >{\centering\arraybackslash$} p{1.3cm} <{$}
}{+0.0 & +0.0 & +0.0 \\ +0.0 & +0.0 & -0.5}.
$$
Furthermore, we pick $\epsilon=\varepsilon=10^{-3}$ and $\mu_k=0.1/k$ for all $k\in\mathbb{Z}_{\geq 1}$. This results in
$\Gamma^*(\alpha)=G^{(1)}+G^{(2)}\alpha_1+G^{(3)}\alpha_1^2 +G^{(4)}\alpha_2,$
where
\begin{equation*}
\begin{split}
G^{(1)}&=\matrix{
>{\centering\arraybackslash$} p{1.3cm} <{$} >{\centering\arraybackslash$} p{1.3cm} <{$} >{\centering\arraybackslash$} p{1.3cm} <{$}
}{ -0.1892  & -1.008 &  0.0 \\     0.0    &     0.0  & -7.1070},\\
G^{(2)}&=\matrix{
>{\centering\arraybackslash$} p{1.3cm} <{$} >{\centering\arraybackslash$} p{1.3cm} <{$} >{\centering\arraybackslash$} p{1.3cm} <{$}
}{ -0.1892  & -1.008 & 0.0 \\  0.0  &  0.0 & 0.0},
\end{split}
\end{equation*}
\begin{equation*}
\begin{split}
G^{(3)}&=\matrix{
>{\centering\arraybackslash$} p{1.3cm} <{$} >{\centering\arraybackslash$} p{1.3cm} <{$} >{\centering\arraybackslash$} p{1.3cm} <{$}
}{ -0.1892  & -1.008 & 0.0 \\  0.0  &  0.0 & 0.0},\\
G^{(4)}&=\matrix{
>{\centering\arraybackslash$} p{1.3cm} <{$} >{\centering\arraybackslash$} p{1.3cm} <{$} >{\centering\arraybackslash$} p{1.3cm} <{$}
}{ 0.0 & 0.0 & 0.0 \\  0.0  & 0.0 &  6.6070}.
\end{split}
\end{equation*}
Figure~\ref{figure1} illustrates the closed-loop performance measure $\|T_{zw}(s;\Gamma^0,\alpha)\|_\infty$ for the initial control design strategy $\Gamma^0$ (left) and the suboptimal closed-loop performance measure $\|T_{zw}(s;\Gamma^*,\alpha)\|_\infty$ (right) as a function of the system parameters $\alpha_i$, $i=1,2$.
\hfill $\blacktriangleleft$
\end{example}

Now, we adapt the definition of the competitive ratio (see~\cite{langbort2010distributed,FLJ2012}) to our problem formulation. Using this measure, we can characterize the value of the model parameter information in the control design. Assume that for every $\alpha \in \mathcal{A}$, there exists an optimal controller $K^*(\alpha) \in \mathcal{K}$ such that
\begin{equation*}
J(K^*(\alpha),\alpha)\leq J(K,\alpha), \; \forall K \in \mathcal{K}.
\end{equation*}
Notice that $K^*:\mathcal{A}\rightarrow \mathcal{K}$ is not necessarily in $\mathcal{C}$ or $\mathcal{C}((\eta_{\ell'})_{\ell'=1}^{L'})$ since its entries might depend on all the parameters in the vector $\alpha$ (and not just some specific subset of them). Now, we define the competitive ratio of a control design method $\Gamma$ as
\begin{equation*}
r(\Gamma)= \sup_{\alpha \in \mathcal{A}} \frac{J(\Gamma(\alpha),\alpha)}{J(K^*(\alpha),\alpha)},
\end{equation*}
with the convention that ``$\frac{0}{0}$'' equals one. Let us calculate this ratio for our numerical example.

\setcounter{example}{0}
\begin{example}[Cont'd] For the definition of the competitive ratio, we need to calculate $K^*(\alpha)$. To do so, we assume that the control graph $\mathcal{G}_\mathcal{K}$ is a complete graph. Consider the output vectors
$$
y_1(t)=y_2(t)=\matrix{c}{ x_1(t) \\ x_2(t) } \in\mathbb{R}^2.
$$
Hence, we are dealing with full state feedback, but it is still a parameter-dependent control design problem. For any $\alpha\in\mathcal{A}$, $K^*(\alpha)$ is a static controller, which can be derived from a convex optimization problem~\cite{zhou1998essentials}. For this setup, let us run Algorithm~\ref{alg:2} with $\alpha(0)=[0 \;\;\; 0]^\top$ and
$$
\Gamma^0(\alpha)=\matrix{
>{\centering\arraybackslash$} p{1.3cm} <{$} >{\centering\arraybackslash$} p{1.3cm} <{$} >{\centering\arraybackslash$} p{1.3cm} <{$} >{\centering\arraybackslash$} p{1.3cm} <{$}
}{0.0 & 0.0 & 0.0 & 0.0 \\ 0.0 & 0.0 & 0.0 & -0.5}.
$$
Then, we get $\Gamma^*(\alpha)=G^{(1)}+G^{(2)}\alpha_1+G^{(3)}\alpha_1^2 +G^{(4)}\alpha_2,$
where
\begin{equation*}
\begin{split}
G^{(1)}&=\matrix{
>{\centering\arraybackslash$} p{1.3cm} <{$} >{\centering\arraybackslash$} p{1.3cm} <{$} >{\centering\arraybackslash$} p{1.3cm} <{$} >{\centering\arraybackslash$} p{1.3cm} <{$}
}{   -0.0624 &  -0.1023   &      0.0   &      0.0 \\
0.0 &     0.0 &  -0.3992  & -1.1650},
\end{split}
\end{equation*}
\begin{equation*}
\begin{split}
G^{(2)}&=\matrix{
>{\centering\arraybackslash$} p{1.3cm} <{$} >{\centering\arraybackslash$} p{1.3cm} <{$} >{\centering\arraybackslash$} p{1.3cm} <{$} >{\centering\arraybackslash$} p{1.3cm} <{$}
}{     -0.0624 &  -0.1023    &    0.0    &     0.0 \\
0.0  &       0.0   &      0.0  &       0.0},\\
G^{(3)}&=\matrix{
>{\centering\arraybackslash$} p{1.3cm} <{$} >{\centering\arraybackslash$} p{1.3cm} <{$} >{\centering\arraybackslash$} p{1.3cm} <{$} >{\centering\arraybackslash$} p{1.3cm} <{$}
}{   -0.0624  & -0.1023      &     0.0    &     0.0 \\
0.0    &     0.0     &    0.0     &    0.0},\\
G^{(4)}&=\matrix{
>{\centering\arraybackslash$} p{1.3cm} <{$} >{\centering\arraybackslash$} p{1.3cm} <{$} >{\centering\arraybackslash$} p{1.3cm} <{$} >{\centering\arraybackslash$} p{1.3cm} <{$}
}{ 0.0  &       0.0    &     0.0   &      0.0 \\
0.0      &   0.0  &  0.3992  &  0.6650}.
\end{split}
\end{equation*}
To calculate the competitive ratio, we grid the set of all eligible parameters $\mathcal{A}$ and calculate $K^*$ (and its closed-loop performance) for each grid point. This results in
$$
r(\Gamma^*)=\sup_{\alpha \in \mathcal{A}} \frac{J(\Gamma(\alpha),\alpha)}{J(K^*(\alpha),\alpha)}=1.1475.
$$
Hence, the closed-loop performance of $\Gamma^*$ can be at most 15\% worse than the performance of the control design strategy with access to the full parameter vector. We can also infer that, although using gradient descent optimization, $\Gamma^*$ is close to the global solution of the optimization problem~\eqref{eqn:optimization} since the performance cost of the global solution must lay somewhere between the performances of $\Gamma^*$ and $K^*$, which are very close to each other thanks to the relatively small $r(\Gamma^*)$. The 15\% performance degradation is partly due to using local model information, but it is also due to the use of the basis functions $\{\eta_{\ell'}\}_{\ell'=1}^4$ to expand the control design strategies (since $\spanf(\eta_{\ell'})_{\ell'=1}^4$ is not dense in $\mathcal{C}$). To portray this fact quantitatively, let us assume that the design graph $\mathcal{G}_\mathcal{C}$ is a complete graph and use Algorithm~\ref{alg:2} to calculate a saddle point $(\Gamma^\bullet,\alpha^\bullet$) of $J$. Doing so, we get
$$
r(\Gamma^\bullet)=1.1344,
$$
so about 13\% of the performance degradation is caused by the choice of the basis functions $\{\eta_{\ell'}\}_{\ell'=1}^4$. This amount can be certainly reduced by increasing $L'$ (i.e., adding to the number of basis functions employed to describe the control design strategies).
\hfill $\blacktriangleleft$
\end{example}

\begin{figure}[t!]
\centering
\includegraphics[width=.5\linewidth]{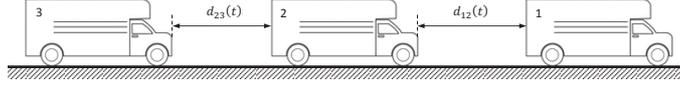}
\caption{\label{figure_platoon} Regulating the distance between three vehicles in a platoon.}
\end{figure}

\begin{figure}[t]
\centering
\begin{tikzpicture}[>=stealth',initial/.style={}]
\node[state,minimum size=0.1cm,scale=0.7](P1){$P_1$};
\node[draw=none,fill=none,node distance=.4cm](dummy)[below =of P1]{};
\node[draw=none,fill=none,node distance=0.4cm][right =of dummy] {$\mathcal{G}_\mathcal{K}$};
\node[state,minimum size=0.1cm,node distance=1.0cm,scale=0.7] (P2) [right =of P1]  {$P_2$};
\node[draw=none,fill=none,node distance=.35cm](dummy2)[left =of P2]{};
\node[state,minimum size=0.1cm,node distance=0.8cm,scale=0.7] (P3) [above =of dummy2]  {$P_3$};
\path (P1) edge [->,in=160,out=240,looseness=8,double=black] (P1);
\path (P2) edge [->,in=-60,out=+20,looseness=8,double=black] (P2);
\path (P3) edge [->,in=50,out=+130,looseness=8,double=black] (P3);
\path (P2) edge [->,relative=false,out=195,in=-15,double=black] (P1);
\path (P1) edge [->,relative=false,out=15,in=165,double=black] (P2);
\path (P2) edge [->,relative=false,out=105,in=-40,double=black] (P3);
\path (P3) edge [->,relative=false,out=-70,in=130,double=black] (P2);
\end{tikzpicture}
\vspace{-.2in}
\caption{\label{graph:example:GK2} The control graph in the vehicle platooning.} 
\end{figure}
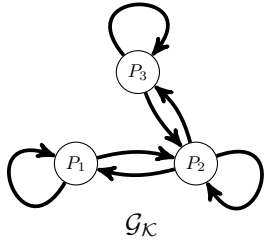

\begin{figure*}[t]
\centering
\vspace{-.1in}
$$
\hspace{-.1in}
\begin{array}{ccc}
\hspace{-.4in}
\begin{tikzpicture}[>=stealth',initial/.style={}]
\node[state,minimum size=0.1cm,scale=0.7](P1){$P_1$};
\node[draw=none,fill=none,node distance=.4cm](dummy)[below =of P1]{};
\node[draw=none,fill=none,node distance=0.4cm][right =of dummy] {$\mathcal{G}_\mathcal{C}$};
\node[state,minimum size=0.1cm,node distance=1.0cm,scale=0.7] (P2) [right =of P1]  {$P_2$};
\node[draw=none,fill=none,node distance=.35cm](dummy2)[left =of P2]{};
\node[state,minimum size=0.1cm,node distance=0.8cm,scale=0.7] (P3) [above =of dummy2]  {$P_3$};
\path (P1) edge [->,in=160,out=240,looseness=8,double=black] (P1);
\path (P2) edge [->,in=-60,out=+20,looseness=8,double=black] (P2);
\path (P3) edge [->,in=50,out=+130,looseness=8,double=black] (P3);
\end{tikzpicture}
&
\begin{tikzpicture}[>=stealth',initial/.style={}]
\node[state,minimum size=0.1cm,scale=0.7](P1){$P_1$};
\node[draw=none,fill=none,node distance=.4cm](dummy)[below =of P1]{};
\node[draw=none,fill=none,node distance=0.4cm][right =of dummy] {$\mathcal{G}'_\mathcal{C}$};
\node[state,minimum size=0.1cm,node distance=1.0cm,scale=0.7] (P2) [right =of P1]  {$P_2$};
\node[draw=none,fill=none,node distance=.35cm](dummy2)[left =of P2]{};
\node[state,minimum size=0.1cm,node distance=0.8cm,scale=0.7] (P3) [above =of dummy2]  {$P_3$};
\path (P1) edge [->,in=160,out=240,looseness=8,double=black] (P1);
\path (P2) edge [->,in=-60,out=+20,looseness=8,double=black] (P2);
\path (P3) edge [->,in=50,out=+130,looseness=8,double=black] (P3);
\path (P2) edge [->,relative=false,out=195,in=-15,double=black] (P1);
\path (P1) edge [->,relative=false,out=15,in=165,double=black] (P2);
\path (P2) edge [->,relative=false,out=105,in=-40,double=black] (P3);
\path (P3) edge [->,relative=false,out=-70,in=130,double=black] (P2);
\end{tikzpicture}
&
\begin{tikzpicture}[>=stealth',initial/.style={}]
\node[state,minimum size=0.1cm,scale=0.7](P1){$P_1$};
\node[draw=none,fill=none,node distance=.4cm](dummy)[below =of P1]{};
\node[draw=none,fill=none,node distance=0.4cm][right =of dummy] {$\mathcal{G}''_\mathcal{C}$};
\node[state,minimum size=0.1cm,node distance=1.0cm,scale=0.7] (P2) [right =of P1]  {$P_2$};
\node[draw=none,fill=none,node distance=.35cm](dummy2)[left =of P2]{};
\node[state,minimum size=0.1cm,node distance=0.8cm,scale=0.7] (P3) [above =of dummy2]  {$P_3$};
\path (P1) edge [->,in=160,out=240,looseness=8,double=black] (P1);
\path (P2) edge [->,in=-60,out=+20,looseness=8,double=black] (P2);
\path (P3) edge [->,in=50,out=+130,looseness=8,double=black] (P3);
\path (P2) edge [->,relative=false,out=195,in=-15,double=black] (P1);
\path (P1) edge [->,relative=false,out=15,in=165,double=black] (P2);
\path (P2) edge [->,relative=false,out=105,in=-40,double=black] (P3);
\path (P3) edge [->,relative=false,out=-70,in=130,double=black] (P2);
\path (P1) edge [->,relative=false,out=45,in=250,double=black] (P3);
\path (P3) edge [->,relative=false,out=220,in=65,double=black] (P1);
\end{tikzpicture}
\end{array}
\vspace{-.25in}
$$
\caption{\label{graph:example:GK1} The design graphs utilized in the vehicle platooning.}
\end{figure*}
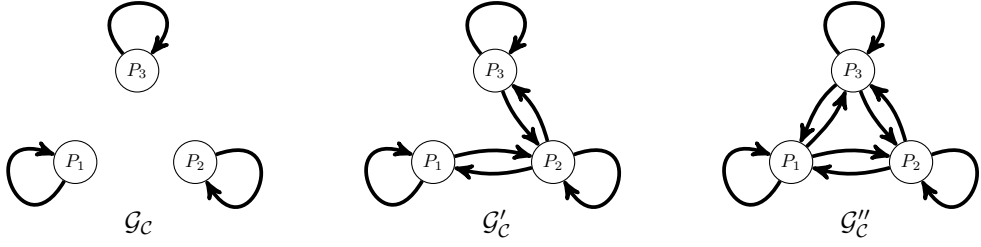

\section{Application to Vehicle Platooning}  \label{sec:platoon}
Consider a physical example where three heavy-duty vehicles are following each other closely in a platoon (see Figure~\ref{figure_platoon}). We can model this system as
$$
\dot{x}(t)=A(\alpha)x(t)+B(\alpha)u(t)+w(t),
$$
where
$$
x(t)=\matrix{ccccc}{v_1(t)&d_{12}(t)&v_2(t)&d_{23}(t)&v_3(t)}^\top \in\mathbb{R}^5,
$$
is the state vector with $v_i(t)$ denoting the velocity of vehicle $i$ and $d_{ij}(t)$ denoting the distance between vehicles $i$ and~$j$ (see Figure~\ref{figure_platoon}). Additionally, $u(t)\in\mathbb{R}^3$ is the control input (i.e., the acceleration of the vehicles), $w(t)\in\mathbb{R}^5$ is the exogenous input (i.e., the effect of wind, road quality, friction, etc),
and $\alpha=[m_1\;m_2\;m_3]^\top\in\mathbb{R}^3$ is the vector of parameters with $m_i$ denoting the mass of vehicle $i$ (scaled by its maximum allowable mass). We define the state of each subsystem as
$$
x_1(t)\hspace{-.04in}=\hspace{-.04in} \matrix{c}{v_1(t)\\ d_{12}(t)}\hspace{-.04in}, \;\;\;
x_2(t)\hspace{-.04in}=\hspace{-.04in} v_2(t),\;\;\;
x_3(t)\hspace{-.04in}=\hspace{-.04in} \matrix{c}{ d_{23}(t)\\ v_3(t)}\hspace{-.04in}.
$$
Furthermore, we have
$$
A(\alpha)=\matrix{ccccc}{-\varrho_1/m_1 & 0 & 0 & 0 & 0 \\
1 & 0 & -1 & 0 & 0 \\ 0 & 0 & -\varrho_2/m_2 & 0 & 0 \\ 0 & 0 & 1 & 0 & -1 \\ 0 & 0 & 0 & 0 & -\varrho_3/m_3}\hspace{-.05in},
$$
and
$$
B(\alpha)=\matrix{ccc}{b_1/m_1 & 0 & 0 \\ 0 & 0 & 0 \\ 0 & b_2/m_2 & 0 \\ 0 & 0 & 0 \\ 0 & 0 & b_3/m_3}\hspace{-.05in},
$$
where $\varrho_i$ is the viscous drag coefficient of vehicle $i$ and $b_i$ is the power conversion quality coefficient. These parameters are all scaled by the maximum allowable mass of each vehicle. Let us fix $\varrho_i=0.1$ and $b_i=1$ for all $i=1,2,3$. We assume that
$$
\mathcal{A}=\{\alpha\in\mathbb{R}^3\,|\, \alpha_i\in[0.5,1.0] \mbox{ for all } i=1,2,3\}.
$$
Clearly, we can satisfy Assumption~\ref{assum:1} with the choice of basis functions $\xi_1(\alpha)=1$, $\xi_2(\alpha)=1/m_1$, $\xi_3(\alpha)=1/m_2$, and $\xi_4(\alpha)=1/m_3$. Now, we assume that each vehicle only has access to the state measurements of its neighbors. This pattern is captured by the control graph $\mathcal{G}_\mathcal{K}$ in Figure~\ref{graph:example:GK2}. Hence, we get
$$
y_1(t)\hspace{-.04in}=\hspace{-.04in} \matrix{c}{\hspace{-.04in}v_1(t)\hspace{-.04in}\\ \hspace{-.04in}d_{12}(t)\hspace{-.04in} \\ \hspace{-.04in}v_2(t)\hspace{-.04in}}\hspace{-.04in}, \;
y_2(t)\hspace{-.04in}=\hspace{-.04in} \matrix{c}{\hspace{-.04in}v_1(t)\hspace{-.04in} \\ \hspace{-.04in}d_{12}(t)\hspace{-.04in} \\ \hspace{-.04in}v_2(t)\hspace{-.04in} \\ \hspace{-.04in}d_{23}(t)\hspace{-.04in} \\ \hspace{-.04in}v_3(t)\hspace{-.04in}}\hspace{-.04in}, \;
y_3(t)\hspace{-.04in}=\hspace{-.04in} \matrix{c}{\hspace{-.04in}v_2(t)\hspace{-.04in} \\ \hspace{-.04in}d_{23}(t)\hspace{-.04in} \\ \hspace{-.04in}v_3(t)\hspace{-.04in}}\hspace{-.04in},
$$
Notice that the choice of these particular observation vectors is convenient as the vehicles can measure them directly (using velocity and distance sensors mounted on the front and the back of the vehicles) and they do not need to relay these measurements to each other through a communication medium. For safety reasons, we would like to ensure that the exogenous inputs do not significantly influence the distances between the vehicles. However, we would like to guarantee this fact using as little control action as possible. We capture this goal by minimizing the $H_\infty$-norm of the closed-loop transfer function from the exogenous inputs $w(t)$ to
$$
z(t)=\matrix{ccccc}{d_{12}(t)&d_{23}(t)&u_1(t)&u_2(t)&u_3(t)}^\top.
$$
Let us use the basis functions $\eta_1(\alpha)=1$, $\eta_2(\alpha)=m_1$, $\eta_3(\alpha)=m_1^2$, $\eta_4(\alpha)=m_2$, $\eta_5(\alpha)=m_2^2$, $\eta_6(\alpha)=m_3$, and $\eta_7(\alpha)=m_3^2$ to expand the control design strategies. We use Algorithm~\ref{alg:2} to compute the optimal control design strategy. Notice that the open-loop system has two poles on the imaginary axis for all $\alpha\in\mathcal{A}$. To eliminate this problem, we initialize the algorithm with an stabilizing control design strategy
\vspace{-0.07in}
$$
\Gamma^0(\alpha)\hspace{-.03in}=\hspace{-.03in}\matrix{ccccccccccc}{
\hspace{-.05in}-3 & 0 & 0 & 0 & \hspace{-.05in}0  &  \hspace{-.05in}0 & 0  & 0 & 0 & 0  & 0 \hspace{-.05in}\\
\hspace{-.05in} 0 & 0 & 0 & 0 & \hspace{-.05in}15 & \hspace{-.05in}-5 & 10 & 0 & 0 & 0  & 0 \hspace{-.05in}\\
\hspace{-.05in} 0 & 0 & 0 & 0 & \hspace{-.05in}0  &  \hspace{-.05in}0 & 0  & 0 & 0 & 10 & -5\hspace{-.05in}}\hspace{-.05in}.
$$
We pick $\alpha(0)=[0.5\;0.5\;0.5]^\top$, $\varepsilon=10^{-2}$, $\epsilon=10^{-3}$, and $\mu_k=1/k$ for all $k\in\mathbb{Z}_{\geq 1}$. For comparisons, note that
$$
\max_{\alpha\in\mathcal{A}} \;\; \left\|T_{zw}\left(s;\Gamma^0,\alpha\right)\right\|_\infty=11.9626.
$$
In the following subsections, we calculate optimal control design strategy under three different information regimes. Note that the importance of communicating parameter information for vehicle platooning was also considered in~\cite{6160938}, where the authors designed decentralized linear quadratic controllers.

\subsection{Local Model Information Availability}
We start with the case where each local controller only relies on the mass of its own vehicle. This model information availability corresponds to the design graph $\mathcal{G}_\mathcal{C}$ in Figure~\ref{graph:example:GK1}. For this case, we get the performance
$$
\max_{\alpha\in\mathcal{A}} \;\; \left\|T_{zw}\left(s;\Gamma^{\mathrm{local}},\alpha\right) \right\|_\infty=4.7905,
$$
where $\Gamma^{\mathrm{local}}$ is the outcome of Algorithm~\ref{alg:2} with the described initialization.

\subsection{Limited Model Information Availability}
Here, we let the neighboring vehicles communicate their mass to each other. This model information availability corresponds to the design graph $\mathcal{G}'_\mathcal{C}$ in Figure~\ref{graph:example:GK1}. For this information regime, we get
$$
\max_{\alpha\in\mathcal{A}} \;\; \left\|T_{zw}\left(s;\Gamma^{\mathrm{limited}},\alpha\right) \right\|_\infty=3.5533,
$$
where $\Gamma^{\mathrm{limited}}$ is the outcome of Algorithm~\ref{alg:2}. Clearly, we get a 25\% improvement in comparison to $\Gamma^{\mathrm{local}}$.

\subsection{Full Model Information Availability}
Finally, we consider the case where each local controller has access to all the model parameters (i.e., the mass of all other vehicles). This model information availability corresponds to the design graph $\mathcal{G}''_\mathcal{C}$ in Figure~\ref{graph:example:GK1}. We get
$$
\max_{\alpha\in\mathcal{A}} \;\; \left\|T_{zw}\left(s;\Gamma^{\mathrm{full}},\alpha\right) \right\|_\infty=3.3596,
$$
where $\Gamma^{\mathrm{full}}$ is the outcome of Algorithm~\ref{alg:2}. It is interesting to note that with access to full model information, we only improve the closed-loop performance by another 5\% in comparison to $\Gamma^{\mathrm{limited}}$. This might be caused by the fact that the first and the third vehicles are not directly interacting. 

\section{Conclusions} \label{sec:conclusion}
In this paper, we studied optimal static control design under limited model information and partial state measurements for continuous-time linear parameter-dependent systems. We defined the control design strategies as mappings from the set of parameters to the set of controllers. Then, we expanded these mappings using basis functions. We proposed a numerical optimization method based on consecutive local minimizations and maximizations of the $H_\infty$--norm of the closed-loop transfer function with respect to the control design strategy gains and the system parameters. The optimization algorithm relied on using the subgradients of this closed-loop performance measure. As future work, we will focus on finding the best basis functions for expanding the control design strategies. We will also study the rate at which the closed-loop performance improves when increasing the number of the basis functions.

\bibliographystyle{ieeetr}
\bibliography{compile_new}
\end{document}